\documentclass[a4paper,centertags,fleqn,12pt,reqno]{amsart}
\usepackage{amsfonts}
\usepackage{amssymb,amsmath,latexsym}
\usepackage[left=2cm]{geometry}
\usepackage{amssymb,amsmath,latexsym}
\usepackage{cite}

\setcounter{MaxMatrixCols}{10}

\newtheorem{thm}{Theorem}
\newtheorem{lem}{Lemma}
\newtheorem{cor}{Corollary}
\newtheorem{remark}{Remark}
\theoremstyle{definition}
\newtheorem{dfn}{Definition}
\theoremstyle{remark} \numberwithin{equation}{section}
\allowdisplaybreaks

\begin{document}
\title[Fekete-Szeg\"{o} problem]{On the Fekete-Szeg\"{o} problem associated
with Libera type close-to-convex functions}
\author{Serap BULUT}
\address{Kocaeli University, Faculty of Aviation and Space Sciences,
Arslanbey Campus, 41285 Kartepe-Kocaeli, TURKEY}
\email{serap.bulut@kocaeli.edu.tr}
\subjclass[2000]{Primary 30C45.}
\keywords{Analytic function, Univalent function, Close-to-convex function,
Fekete-Szeg\"{o} problem.}

\begin{abstract}
The main purpose of this paper is to introduce a new comprehensive subclass
of analytic close-to-convex functions and derive Fekete-Szeg\"{o}
inequalities for functions belonging to this new class by using a different
way. Various known special cases of our results are also pointed out.
\end{abstract}

\maketitle

\section{Introduction}

Let $\mathcal{A}$ denote the class of functions of the form%
\begin{equation}
f(z)=z+\sum_{k=2}^{\infty }a_{k}z^{k}  \label{1.1}
\end{equation}%
which are analytic in the open unit disk $\mathbb{U}=\left\{ z\in \mathbb{C}%
:\left\vert z\right\vert <1\right\} .$ Also let $\mathcal{S}$ denote the
subclass of $\mathcal{A}$ consisting of univalent functions in $\mathbb{U}.$

For $f\in \mathcal{S}$ given by $\left( \ref{1.1}\right) $, Fekete and Szeg%
\"{o} \cite{FS} proved a noticeable result that%
\begin{equation}
\left\vert a_{3}-\mu a_{2}^{2}\right\vert \leq \left\{
\begin{array}{lll}
3-4\mu & , & \mu \leq 0 \\
1+2\exp \left( \frac{-2\mu }{1-\mu }\right) & , & 0\leq \mu \leq 1 \\
4\mu -3 & , & \mu \geq 1%
\end{array}%
\right.  \label{1.a}
\end{equation}%
holds. The result is sharp in the sense that for each $\mu $ there is a
function in the class under consideration for which equality holds.

The coefficient functional%
\begin{equation*}
\phi _{\mu }\left( f\right) =a_{3}-\mu a_{2}^{2}=\frac{1}{6}\left( f^{\prime
\prime \prime }\left( 0\right) -\frac{3\mu }{2}\left( f^{\prime \prime
}\left( 0\right) \right) ^{2}\right)
\end{equation*}%
on $f\in \mathcal{A}$ represents various geometric quantities as well as in
the sense that this behaves well with respect to the rotation, namely%
\begin{equation*}
\phi _{\mu }\left( e^{-i\theta }f\left( e^{i\theta }z\right) \right)
=e^{2i\theta }\phi _{\mu }\left( f\right) \quad \left( \theta \in \mathbb{R}%
\right) .
\end{equation*}

In fact, other than the simplest case when%
\begin{equation*}
\phi _{0}\left( f\right) =a_{3},
\end{equation*}%
we have several important ones. For example,%
\begin{equation*}
\phi _{1}\left( f\right) =a_{3}-a_{2}^{2}
\end{equation*}%
represents $S_{f}\left( 0\right) /6,$ where $S_{f}$ denotes the Schwarzian
derivative%
\begin{equation*}
S_{f}\left( z\right) =\left( \frac{f^{\prime \prime }\left( z\right) }{%
f^{\prime }\left( z\right) }\right) ^{\prime }-\frac{1}{2}\left( \frac{%
f^{\prime \prime }\left( z\right) }{f^{\prime }\left( z\right) }\right) ^{2}.
\end{equation*}

Thus it is quite natural to ask about inequalities for $\phi _{\mu }$
corresponding to subclasses of $\mathcal{S}.$ This is called Fekete-Szeg\"{o}
problem. Actually, many authors have considered this problem for typical
classes of univalent functions (see, for instance \cite{AT, B1, B2, B3, CKS,
CTU, DT, DT2, KM, K2, L, MM}).

A function $f\in \mathcal{A}$ is said to be starlike of order $\beta $%
\thinspace $\left( 0\leq \beta <1\right) $ if it satisfies the inequality%
\begin{equation*}
\Re \left( \frac{zf^{\prime }(z)}{f(z)}\right) >\beta \qquad \left( z\in
\mathbb{U}\right) .
\end{equation*}%
We denote the class which consists of all functions $f\in \mathcal{A}$ that
are starlike of order $\beta $ by $\mathcal{S}^{\ast }(\beta )$. It is
well-known that $\mathcal{S}^{\ast }(\beta )\subset \mathcal{S}^{\ast }(0)=%
\mathcal{S}^{\ast }\subset \mathcal{S}.$

Let $0\leq \alpha ,\beta <1.$ A function $f\in \mathcal{A}$ is said to be
close-to-convex of order $\alpha $ and type $\beta $ if there exists a
function $g\in \mathcal{S}^{\ast }\left( \beta \right) $ such that the
inequality%
\begin{equation*}
\Re \left( \frac{zf^{\prime }(z)}{g(z)}\right) >\alpha \qquad \left( z\in
\mathbb{U}\right)
\end{equation*}%
holds. We denote the class which consists of all functions $f\in \mathcal{A}$
that are close-to-convex of order $\alpha $ and type $\beta $ by $\mathcal{C}%
(\alpha ,\beta )$. This class is introduced by Libera \cite{L}.

In particular, when $\beta =0$ we have $\mathcal{C}(\alpha ,0)=\mathcal{C}%
(\alpha )$ of close-to-convex functions of order $\alpha $, and also we get $%
\mathcal{C}(0,0)=\mathcal{C}$ of close-to-convex functions introduced by
Kaplan \cite{K}. It is well-known that $\mathcal{S}^{\ast }\subset \mathcal{C%
}\subset \mathcal{S}$.

Keogh and Merkes \cite{KM} stated the Fekete-Szeg\"{o} inequalities for
functions in the classes $\mathcal{S}^{\ast }\left( \beta \right) $ and $%
\mathcal{C}$, respectively, as follows:

\begin{thm}
\label{lem5}For $0\leq \beta <1$, let $f\left( z\right) $ given by $\left( %
\ref{1.1}\right) $ belongs to the function class $\mathcal{S}^{\ast }(\beta
).$ Then for any real number $\mu ,$%
\begin{equation*}
\left\vert a_{3}-\mu a_{2}^{2}\right\vert \leq \left( 1-\beta \right) \max
\left\{ 1,\,\left\vert 3-2\beta -4\mu \left( 1-\beta \right) \right\vert
\right\} .
\end{equation*}
\end{thm}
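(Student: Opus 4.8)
The plan is to reduce the problem to a coefficient functional for the Carath\'eodory class of functions with positive real part. Since $f\in\mathcal{S}^{\ast}(\beta)$, the quantity $zf'(z)/f(z)$ has real part exceeding $\beta$, so I can write $zf'(z)/f(z)=\beta+(1-\beta)p(z)$, where $p(z)=1+c_1z+c_2z^2+\cdots$ belongs to the class $\mathcal{P}$ of analytic functions with positive real part normalized by $p(0)=1$. The first step is therefore to record this representation and to clear the denominator, rewriting the defining relation as the power-series identity $zf'(z)=[\beta+(1-\beta)p(z)]f(z)$.

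Next I would equate coefficients on the two sides of this identity. Matching the coefficients of $z^2$ and $z^3$ yields $a_2=(1-\beta)c_1$ and $2a_3=(1-\beta)^2c_1^2+(1-\beta)c_2$, so that $a_3=\tfrac{1}{2}\left[(1-\beta)^2c_1^2+(1-\beta)c_2\right]$. Substituting these expressions into the Fekete-Szeg\"o functional and factoring out the common factor, I expect to arrive at
\begin{equation*}
a_3-\mu a_2^2=\frac{1-\beta}{2}\left[c_2-(1-\beta)(2\mu-1)c_1^2\right],
\end{equation*}
which exhibits the functional as a fixed multiple of $c_2-\nu c_1^2$ with $\nu=(1-\beta)(2\mu-1)$.

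The decisive step is then to invoke the classical estimate for the Carath\'eodory class, namely $|c_2-\nu c_1^2|\leq 2\max\{1,\,|2\nu-1|\}$, valid for every complex $\nu$ (a standard consequence of $|c_1|\leq 2$ together with the representation of $c_2$ in terms of $c_1$). Applying this with the above value of $\nu$ gives
\begin{equation*}
|a_3-\mu a_2^2|\leq (1-\beta)\max\{1,\,|2\nu-1|\},
\end{equation*}
and a short computation shows $2\nu-1=4\mu(1-\beta)-(3-2\beta)$, whence $|2\nu-1|=|3-2\beta-4\mu(1-\beta)|$, which is precisely the claimed bound. I expect the main obstacle to be purely one of bookkeeping: one must carry the factor $(1-\beta)$ faithfully through the coefficient comparison and the factorization, so that the substitution into the Carath\'eodory estimate produces exactly $|3-2\beta-4\mu(1-\beta)|$ and not a mis-scaled variant. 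Once the functional has been brought to the form $\tfrac{1-\beta}{2}\,|c_2-\nu c_1^2|$, the remainder of the argument is immediate.
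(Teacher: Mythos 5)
Your proof is correct, and all the algebra checks out: the representation $zf'(z)=[\beta+(1-\beta)p(z)]f(z)$ gives $a_2=(1-\beta)c_1$ and $2a_3=(1-\beta)^2c_1^2+(1-\beta)c_2$, the functional collapses to $\tfrac{1-\beta}{2}\bigl[c_2-\nu c_1^2\bigr]$ with $\nu=(1-\beta)(2\mu-1)$, and indeed $2\nu-1=4\mu(1-\beta)-(3-2\beta)$. Note that the paper itself does not prove this statement: it quotes it as a known theorem of Keogh and Merkes \cite{KM}. Your argument is the standard one, and it relies on exactly the tools the paper assembles for its own results, namely the Carath\'eodory-class estimate $|c_2-\nu c_1^2|\leq 2\max\{1,|2\nu-1|\}$, which is the paper's Lemma \ref{lem2} (cited to Ma and Minda), combined with the same representation-and-coefficient-comparison scheme used in the proofs of Lemma \ref{thm1} and Theorem \ref{thm2}. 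The only cosmetic point is your parenthetical justification of that estimate: it does not follow merely from $|c_1|\leq 2$; the standard proof uses the sharper inequality $\left|c_2-\tfrac{1}{2}c_1^2\right|\leq 2-\tfrac{1}{2}|c_1|^2$, but since you are invoking an established lemma rather than reproving it, this does not affect the validity of your argument. As a bonus, your proof actually yields the inequality for all complex $\mu$, not just real $\mu$ as stated.
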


\begin{thm}
\label{thm.KM}If $f\in \mathcal{C}$ and if $\mu $\ is real, then%
\begin{equation*}
\left\vert a_{3}-\mu a_{2}^{2}\right\vert \leq \left\{
\begin{array}{lll}
3-4\mu & , & \mu \leq \frac{1}{3} \\
&  &  \\
\frac{1}{3}-\frac{4}{9\mu } & , & \frac{1}{3}\leq \mu \leq \frac{2}{3} \\
&  &  \\
1 & , & \frac{2}{3}\leq \mu \leq 1 \\
&  &  \\
4\mu -3 & , & \mu \geq 1%
\end{array}%
\right. .
\end{equation*}%
For each $\mu $, there is a function in $\mathcal{C}$ such that equality
holds.
\end{thm}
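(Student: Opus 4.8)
The plan is to combine the analytic representation of the close-to-convex class with the sharp coefficient estimates for the Carath\'eodory class. Since $f\in\mathcal{C}=\mathcal{C}(0,0)$, there is a starlike function $g(z)=z+b_{2}z^{2}+b_{3}z^{3}+\cdots$ in $\mathcal{S}^{\ast }$ such that $p(z)=zf^{\prime }(z)/g(z)=1+p_{1}z+p_{2}z^{2}+\cdots$ has positive real part. Comparing coefficients in $zf^{\prime }(z)=p(z)g(z)$ gives $a_{2}=\tfrac12(b_{2}+p_{1})$ and $a_{3}=\tfrac13(b_{3}+p_{1}b_{2}+p_{2})$. To remove the starlike coefficients I would use that $g\in\mathcal{S}^{\ast }$ is equivalent to $zg^{\prime }(z)/g(z)=h(z)=1+h_{1}z+h_{2}z^{2}+\cdots$ having positive real part; comparing coefficients in $zg^{\prime }(z)=h(z)g(z)$ yields $b_{2}=h_{1}$ and $b_{3}=\tfrac12(h_{1}^{2}+h_{2})$.

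Substituting and simplifying, the functional becomes
\[
a_{3}-\mu a_{2}^{2}=\left(\tfrac16-\tfrac{\mu}{4}\right)h_{1}^{2}+\tfrac16 h_{2}+\left(\tfrac13-\tfrac{\mu}{2}\right)h_{1}p_{1}+\tfrac13 p_{2}-\tfrac{\mu}{4}p_{1}^{2}.
\]
The next step is to invoke the sharp second-coefficient estimate for functions of positive real part: if $1+c_{1}z+c_{2}z^{2}+\cdots$ has positive real part, then $\left\vert c_{2}-\tfrac12 c_{1}^{2}\right\vert\le 2-\tfrac12\left\vert c_{1}\right\vert^{2}$. Applying this to both $h$ and $p$ lets me write $h_{2}=\tfrac12 h_{1}^{2}+(2-\tfrac12\left\vert h_{1}\right\vert^{2})\zeta$ and $p_{2}=\tfrac12 p_{1}^{2}+(2-\tfrac12\left\vert p_{1}\right\vert^{2})\omega$ with $\left\vert\zeta\right\vert\le 1$ and $\left\vert\omega\right\vert\le 1$. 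Collecting terms reduces the functional to
\[
a_{3}-\mu a_{2}^{2}=\tfrac{1-\mu}{4}h_{1}^{2}+\tfrac{2-3\mu}{6}h_{1}p_{1}+\tfrac{2-3\mu}{12}p_{1}^{2}+\tfrac{(4-\left\vert h_{1}\right\vert^{2})\zeta}{12}+\tfrac{(4-\left\vert p_{1}\right\vert^{2})\omega}{6}.
\]

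Writing $r=\left\vert h_{1}\right\vert\in[0,2]$ and $s=\left\vert p_{1}\right\vert\in[0,2]$ and applying the triangle inequality (using $\left\vert\zeta\right\vert,\left\vert\omega\right\vert\le 1$ and $4-r^{2},4-s^{2}\ge 0$), I obtain $\left\vert a_{3}-\mu a_{2}^{2}\right\vert\le\Phi(r,s)$, where
\[
\Phi(r,s)=\tfrac{\left\vert 1-\mu\right\vert}{4}r^{2}+\tfrac{\left\vert 2-3\mu\right\vert}{6}rs+\tfrac{\left\vert 2-3\mu\right\vert}{12}s^{2}+\tfrac{4-r^{2}}{12}+\tfrac{4-s^{2}}{6}.
\]
It then remains to maximize $\Phi$ over the square $[0,2]\times[0,2]$, splitting into the four ranges dictated by the signs of $1-\mu$ and $2-3\mu$, namely $\mu\le\tfrac13$, $\tfrac13\le\mu\le\tfrac23$, $\tfrac23\le\mu\le 1$, and $\mu\ge 1$. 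For $\mu\le\tfrac13$ and $\mu\ge 1$ the maximum sits at the corner $(2,2)$, giving $3-4\mu$ and $4\mu-3$; on $[\tfrac23,1]$ it is attained along an edge and equals $1$; on $[\tfrac13,\tfrac23]$ the partial derivatives force the maximum to an interior point $r=2$, $s_{0}=\tfrac{2(2-3\mu)}{3\mu}$, and substituting this point into $\Phi$ returns exactly the value recorded for that interval in the statement.

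The main obstacle is this final constrained maximization together with its sharpness. Since the triangle inequality is applied termwise, one must verify in each $\mu$-range that the phases of $h_{1}$, $p_{1}$, $\zeta$ and $\omega$ can be chosen simultaneously to meet the equality condition of the triangle inequality and the extremal condition of the Carath\'eodory estimate; this is precisely what produces the extremal functions and establishes sharpness. The interval $[\tfrac13,\tfrac23]$ is the delicate case, since there the optimal $(r,s)$ is not a vertex of the square and the cross term $h_{1}p_{1}$ genuinely couples the two functions $h$ and $p$, so the alignment of phases must be handled with care rather than read off from a corner.
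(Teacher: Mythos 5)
This theorem is quoted in the paper from Keogh and Merkes \cite{KM} without proof (it enters the paper only through its generalization, Theorem \ref{thm.DT}, which is what the proof of Theorem \ref{thm3} actually invokes), so your argument has to stand entirely on its own. Most of it does: the coefficient identities $a_{2}=\frac{1}{2}\left( h_{1}+p_{1}\right) $, $a_{3}=\frac{1}{6}h_{1}^{2}+\frac{1}{6}h_{2}+\frac{1}{3}h_{1}p_{1}+\frac{1}{3}p_{2}$, the sharp estimate $\left\vert c_{2}-\frac{1}{2}c_{1}^{2}\right\vert \leq 2-\frac{1}{2}\left\vert c_{1}\right\vert ^{2}$ for Carath\'{e}odory functions, and the reduction to maximizing $\Phi (r,s)$ on $[0,2]^{2}$ are all algebraically correct (this is essentially the original Keogh--Merkes argument), and the corner analysis for $\mu \leq \frac{1}{3}$, $\frac{2}{3}\leq \mu \leq 1$ and $\mu \geq 1$ produces the right values.

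The genuine gap is in the middle range $\frac{1}{3}\leq \mu \leq \frac{2}{3}$, the one step you assert without computing. There $\Phi $ is increasing in $r$, and
\[
\Phi (2,s)=1+\frac{2-3\mu }{3}+\frac{2-3\mu }{3}\,s-\frac{\mu }{4}\,s^{2}
\]
is maximized at $s_{0}=\frac{2\left( 2-3\mu \right) }{3\mu }$, where
\[
\Phi (2,s_{0})=1+\frac{2-3\mu }{3}+\frac{\left( 2-3\mu \right) ^{2}}{9\mu }=\frac{3\mu +4}{9\mu }=\frac{1}{3}+\frac{4}{9\mu }.
\]
This is \emph{not} ``exactly the value recorded for that interval in the statement'': the statement prints $\frac{1}{3}-\frac{4}{9\mu }$. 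That printed value is in fact a misprint of the Keogh--Merkes bound --- it equals $-1$ at $\mu =\frac{1}{3}$, which cannot bound a modulus, it is discontinuous against the adjacent cases, and it contradicts the $\alpha =\beta =0$ specialization of Theorem \ref{thm.DT}, which gives $3\left\vert a_{3}-\mu a_{2}^{2}\right\vert \leq 1+\frac{4}{3\mu }$ on this interval. So a correct execution of your method proves the corrected bound $\frac{1}{3}+\frac{4}{9\mu }$; your claim of exact agreement with the printed formula is false and shows the final evaluation was never actually carried out. The second gap is sharpness: the theorem asserts extremal functions for every $\mu $, and you only remark that the phases ``must be handled with care''. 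They can be exhibited concretely: take $h=p=\frac{1+z}{1-z}$ (i.e.\ $g$ the Koebe function) for $\mu \leq \frac{1}{3}$ and $\mu \geq 1$; take $h_{1}=p_{1}=0$, $h_{2}=p_{2}=2$, i.e.\ $g(z)=\frac{z}{1-z^{2}}$ and $p(z)=\frac{1+z^{2}}{1-z^{2}}$, for $\frac{2}{3}\leq \mu \leq 1$; and for $\frac{1}{3}\leq \mu \leq \frac{2}{3}$ take $g$ the Koebe function together with
\[
p(z)=\frac{s_{0}}{2}\cdot \frac{1+z}{1-z}+\left( 1-\frac{s_{0}}{2}\right) \frac{1+z^{2}}{1-z^{2}},
\]
which lies in $\mathcal{P}$, has $p_{1}=s_{0}$ and $p_{2}=2$, and makes every term in your decomposition real and nonnegative, so that equality holds throughout.
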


Recently, Darus and Thomas \cite{DT} generalized the results of Theorem $\ref%
{thm.KM}$ for functions $f\in \mathcal{C}(\alpha ,\beta )$ as the following.

\begin{thm}
\label{thm.DT}Let $f\in \mathcal{C}(\alpha ,\beta )$ and be given by $\left( %
\ref{1.1}\right) $. Then for $0\leq \alpha ,\beta <1$,%
\begin{equation*}
3\left\vert a_{3}-\mu a_{2}^{2}\right\vert \leq \left\{
\begin{array}{lll}
\left( 3-2\beta \right) \left( 3-2\alpha -\beta \right) -3\mu \left(
2-\alpha -\beta \right) ^{2} & , & \mu \leq \frac{2\left( 1-\beta \right) }{%
3\left( 2-\alpha -\beta \right) } \\
&  &  \\
1-2\alpha +\beta \left( 3-2\beta \right) +\frac{4}{3\mu }\left( 1-\beta
\right) ^{2} & , & \frac{2\left( 1-\beta \right) }{3\left( 2-\alpha -\beta
\right) }\leq \mu \leq \frac{2}{3} \\
&  &  \\
3-2\alpha -\beta & , & \frac{2}{3}\leq \mu \leq \frac{2\left( 2-\beta
\right) \left( 3-2\alpha -\beta \right) }{3\left( 2-\alpha -\beta \right)
^{2}} \\
&  &  \\
\left( 2\beta -3\right) \left( 3-2\alpha -\beta \right) +3\mu \left(
2-\alpha -\beta \right) ^{2} & , & \mu \geq \frac{2\left( 2-\beta \right)
\left( 3-2\alpha -\beta \right) }{3\left( 2-\alpha -\beta \right) ^{2}}%
\end{array}%
\right. .
\end{equation*}%
For each $\mu $, there is a function in $\mathcal{C}(\alpha ,\beta )$ such
that equality holds.
\end{thm}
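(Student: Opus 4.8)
The plan is to translate membership in $\mathcal{C}(\alpha,\beta)$ into two functions of positive real part and then solve the resulting real extremal problem. Given $f\in\mathcal{C}(\alpha,\beta)$, fix an associated $g\in\mathcal{S}^{\ast}(\beta)$ and introduce $p,q$ analytic in $\mathbb{U}$ with $p(0)=q(0)=1$ and positive real part through
\[
\frac{zg'(z)}{g(z)}=\beta+(1-\beta)p(z),\qquad \frac{zf'(z)}{g(z)}=\alpha+(1-\alpha)q(z).
\]
Writing $p(z)=1+p_{1}z+p_{2}z^{2}+\cdots$ and $q(z)=1+q_{1}z+q_{2}z^{2}+\cdots$, the first relation gives $b_{2}=(1-\beta)p_{1}$ and $b_{3}=\tfrac12[(1-\beta)^{2}p_{1}^{2}+(1-\beta)p_{2}]$ for $g$, and substituting these into the second relation yields $a_{2}=\tfrac12[(1-\beta)p_{1}+(1-\alpha)q_{1}]$ together with $a_{3}=\tfrac16(1-\beta)^{2}p_{1}^{2}+\tfrac16(1-\beta)p_{2}+\tfrac13(1-\beta)(1-\alpha)p_{1}q_{1}+\tfrac13(1-\alpha)q_{2}$. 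Forming the functional and grouping it into a $p$-part, a $q$-part and a cross term gives
\[
3(a_{3}-\mu a_{2}^{2})=\tfrac{1-\beta}{2}p_{2}+\tfrac{(1-\beta)^{2}(2-3\mu)}{4}p_{1}^{2}+\tfrac{(1-\beta)(1-\alpha)(2-3\mu)}{2}p_{1}q_{1}+(1-\alpha)q_{2}-\tfrac{3\mu(1-\alpha)^{2}}{4}q_{1}^{2}.
\]
The factor $2-3\mu$ attached to the $p_{1}^{2}$ and $p_{1}q_{1}$ terms, set against the $-3\mu$ on $q_{1}^{2}$, already foreshadows that the sign of $2-3\mu$ will drive the case split; taking $\alpha=\beta=0$ recovers Theorem \ref{thm.KM} as a check.

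The core of the proof is a sharp estimate of $\left|3(a_{3}-\mu a_{2}^{2})\right|$. I would invoke $|p_{1}|,|q_{1}|\le 2$ together with the classical representations $2p_{2}=p_{1}^{2}+(4-p_{1}^{2})x$ and $2q_{2}=q_{1}^{2}+(4-q_{1}^{2})y$, with $|x|,|y|\le 1$, substitute them, and then — using the rotational invariance $\phi_{\mu}(e^{-i\theta}f(e^{i\theta}z))=e^{2i\theta}\phi_{\mu}(f)$, so that the functional may be taken real and nonnegative, together with a phase-alignment argument — reduce to extremizing a genuinely real function of $s=|p_{1}|,t=|q_{1}|\in[0,2]$ and $x,y\in[-1,1]$, the desired bound being the larger of its maximum and the modulus of its minimum. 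The main obstacle is precisely this optimization: the cross term couples $s$ and $t$, so a crude term-by-term triangle estimate is not sharp, and one must follow how the extremizer migrates as $\mu$ varies.

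Carrying this out, positivity of the coefficients $\tfrac{1-\beta}{2}$ of $p_{2}$ and $1-\alpha$ of $q_{2}$ forces $x=y=1$ when maximizing. The corner $s=t=2$ (that is, $p=q=\tfrac{1+z}{1-z}$) gives the value $(3-2\beta)(3-2\alpha-\beta)-3\mu(2-\alpha-\beta)^{2}$, which is the positive maximum for small $\mu$ (first case); as $\mu$ passes $\tfrac{2(1-\beta)}{3(2-\alpha-\beta)}$ the optimal $t$ detaches from the endpoint $2$ and becomes interior (second case); at $\mu=\tfrac23$ the factor $2-3\mu$ vanishes, the quadratic form in $(s,t)$ becomes negative semidefinite, the maximizer collapses to the origin $s=t=0$, and for $\mu\ge\tfrac23$ the maximum equals the constant $3-2\alpha-\beta$ (third case). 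Meanwhile, once $\mu$ is large the same corner $s=t=2$ makes the above value negative, supplying the minimum whose modulus $3\mu(2-\alpha-\beta)^{2}-(3-2\beta)(3-2\alpha-\beta)$ overtakes $3-2\alpha-\beta$ exactly at $\mu=\tfrac{2(2-\beta)(3-2\alpha-\beta)}{3(2-\alpha-\beta)^{2}}$, which opens the fourth case. Matching these four optimal configurations to the four expressions in the statement, and checking continuity at the three thresholds, completes the estimate.

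Finally, sharpness is read off from the extremal configurations: the corner $s=t=2$ corresponds to $p(z)=q(z)=\tfrac{1+z}{1-z}$ (first and fourth cases), the collapse $s=t=0$ to $p(z)=q(z)=\tfrac{1+z^{2}}{1-z^{2}}$ (third case), and the interior optimum to $p(z)=\tfrac{1+z}{1-z}$ together with the $q$ realizing the optimal value of $t$ (second case). Integrating the defining relations back to $g$ and then to $f$ exhibits, for each $\mu$, a function in $\mathcal{C}(\alpha,\beta)$ attaining equality.
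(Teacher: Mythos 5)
First, a point of comparison: the paper itself contains no proof of this theorem --- it is quoted from Darus and Thomas \cite{DT} and then used as the engine of the paper's Theorem \ref{thm3}, whose proof reduces the class $\mathcal{K}_{\lambda ,\delta }\left( \alpha ,\beta \right)$ to $\mathcal{C}(\alpha ,\beta )$ via the function $F_{\lambda ,\delta }$. So your proposal must be judged against the classical argument of Keogh--Merkes \cite{KM} and Darus--Thomas \cite{DT}, and in outline you follow exactly that route: two Carath\'eodory functions $p,q$, an explicit expansion of $3(a_{3}-\mu a_{2}^{2})$, and a real extremal problem in $|p_{1}|,|q_{1}|$. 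Your algebra is correct (the formulas for $b_{2},b_{3},a_{2},a_{3}$ and the decomposition with the factor $2-3\mu$ all check out), and your description of the extremal landscape is right: the interior optimum $t^{\ast }=\tfrac{2(2-3\mu )(1-\beta )}{3\mu (1-\alpha )}$ exits $[0,2]$ precisely at $\mu =\tfrac{2(1-\beta )}{3(2-\alpha -\beta )}$, the maximizer collapses to $s=t=0$ at $\mu =\tfrac{2}{3}$, and the corner value's modulus overtakes $3-2\alpha -\beta $ precisely at $\mu =\tfrac{2(2-\beta )(3-2\alpha -\beta )}{3(2-\alpha -\beta )^{2}}$; all four case values and thresholds are reproduced correctly, as is the sharpness discussion.

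The genuine gap is the step you dispose of with the phrase ``phase-alignment argument.'' The rotation $f\mapsto e^{-i\theta }f(e^{i\theta }z)$ multiplies $p_{1}$ and $q_{1}$ by the \emph{same} factor $e^{i\theta }$, so it lets you assume $3(a_{3}-\mu a_{2}^{2})\geq 0$, but it does not let you take $p_{1}$ and $q_{1}$ simultaneously real; and your assertion that the sharp bound is ``the larger of the maximum and the modulus of the minimum'' of the real-variable reduction is precisely what has to be proved --- it is not a consequence of rotation invariance. Concretely, write $p_{1}=u_{1}+iv_{1}$, $q_{1}=u_{2}+iv_{2}$ and maximize over $p_{2},q_{2}$ first using $\Re \,p_{2}\leq 2-v_{1}^{2}$ (which follows from $\left\vert p_{2}-\tfrac{1}{2}p_{1}^{2}\right\vert \leq 2-\tfrac{1}{2}\left\vert p_{1}\right\vert ^{2}$); what remains is
\begin{equation*}
\Re \bigl(3(a_{3}-\mu a_{2}^{2})\bigr)\leq (3-2\alpha -\beta )+\bigl[Au_{1}^{2}+Bu_{1}u_{2}+Cu_{2}^{2}\bigr]-\Bigl[\bigl(\tfrac{1-\beta }{2}+A\bigr)v_{1}^{2}+Bv_{1}v_{2}+\bigl((1-\alpha )+C\bigr)v_{2}^{2}\Bigr],
\end{equation*}
where $A=\tfrac{(1-\beta )^{2}(2-3\mu )}{4}$, $B=\tfrac{(1-\alpha )(1-\beta )(2-3\mu )}{2}$, $C=-\tfrac{3\mu (1-\alpha )^{2}}{4}$, subject to $u_{j}^{2}+v_{j}^{2}\leq 4$. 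Your real reduction inspects only the slices $v_{1}=v_{2}=0$ (your maximum) and $u_{1}=u_{2}=0$ (your modulus of the minimum). Mixed configurations --- for instance $p_{1}$ real while $q_{1}$ is purely imaginary, or $p_{1},q_{1}$ real of opposite signs, which flips the sign of the cross term exactly in the range $\mu >\tfrac{2}{3}$ where $B<0$ --- are not rotations of real configurations and are never excluded by your argument. Ruling them out is the actual hard content of the Keogh--Merkes/Darus--Thomas proofs (carried out there by grouping, e.g. $\tfrac{1-\beta }{2}\bigl(p_{2}-\tfrac{1}{2}p_{1}^{2}\bigr)+\cdots $, and applying the triangle inequality with different groupings in different $\mu $-ranges); the elementary calculus over $(s,t)\in \lbrack 0,2]^{2}$ that you present as ``the main obstacle'' is the easy part. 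Since the theorem is true, the mixed configurations indeed never win, but your proposal asserts this rather than proves it, so as it stands the proof is incomplete at its central step.
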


Al-Abbadi and Darus \cite{AD2} introduced a general subclass $\mathcal{U}%
_{\lambda ,\alpha }^{\beta }$ of close-to-convex functions as follows:

\begin{dfn}
\label{dfn1}For $0\leq \lambda <1$, $0\leq \alpha <1$ and $0\leq \beta <1$,
let the function $f\in \mathcal{A}$ be given by $\left( \ref{1.1}\right) $.
Then the function $f\in \mathcal{U}_{\lambda ,\alpha }^{\beta }$ if and only
if there exist $g\in \mathcal{S}^{\ast }(\beta )$ such that%
\begin{equation*}
\Re \left( \frac{zf^{\prime }\left( z\right) +\lambda z^{2}f^{\prime \prime
}\left( z\right) }{g\left( z\right) }\right) >\alpha \qquad \left( z\in
\mathbb{U}\right) .
\end{equation*}
\end{dfn}

Now we define a more comprehensive class of close-to-convex functions of
order $\alpha $ and type $\beta $:

\begin{dfn}
\label{dfn2}Let $0\leq \alpha <1$ and $0\leq \delta \leq \lambda \leq 1.$We
denote by $\mathcal{K}_{\lambda ,\delta }\left( \alpha ,\beta \right) $ the
class of functions $f\in \mathcal{A}$ satisfying%
\begin{equation}
\Re \left( \frac{zf^{\prime }\left( z\right) +\left( \lambda -\delta
+2\lambda \delta \right) z^{2}f^{\prime \prime }\left( z\right) +\lambda
\delta z^{3}f^{\prime \prime \prime }(z)}{g\left( z\right) }\right) >\alpha
\qquad \left( z\in \mathbb{U}\right) ,  \label{def.1}
\end{equation}%
where $g\in \mathcal{S}^{\ast }(\beta );\;0\leq \beta <1.$
\end{dfn}

\begin{remark}
$(i)$ For $\delta =0,$ the class $\mathcal{K}_{\lambda ,\delta }\left(
\alpha ,\beta \right) $ reduces to the class $\mathcal{U}_{\lambda ,\alpha
}^{\beta }$ defined in Definition $\ref{dfn1}$.

\noindent $(ii)$ For $\delta =0$ and $\alpha =0,$ the class $\mathcal{K}%
_{\lambda ,\delta }\left( \alpha ,\beta \right) $ reduces to the class $%
\mathcal{U}_{\lambda }^{\beta }$ which consists of functions $f\in \mathcal{A%
}$ satisfying%
\begin{equation*}
\Re \left( \frac{zf^{\prime }\left( z\right) +\lambda z^{2}f^{\prime \prime
}\left( z\right) }{g\left( z\right) }\right) >0\qquad \left( z\in \mathbb{U}%
\right) ,
\end{equation*}%
where $g\in \mathcal{S}^{\ast }(\beta );\;0\leq \beta <1;\;0\leq \lambda
\leq 1.$ This class introduced and studied by Al-Abbadi and Darus \cite{AD}.

\noindent $(iii)$ For $\delta =0$ and $\lambda =0,$ the class $\mathcal{K}%
_{\lambda ,\delta }\left( \alpha ,\beta \right) $ reduces to the class $%
\mathcal{C}(\alpha ,\beta ).$
\end{remark}

\section{Preliminary Results}

We denote by $\mathcal{P}$ a class of the analytic functions in $\mathbb{U}$
with%
\begin{equation*}
p(0)=1\qquad \text{and\qquad }\Re \left( p\left( z\right) \right) >0.
\end{equation*}

We shall require the following lemmas.

\begin{lem}
\label{lem1}\cite{D} Let $p\in \mathcal{P}$ with $p\left( z\right)
=1+c_{1}z+c_{2}z^{2}+\cdots .$ Then%
\begin{equation*}
\left\vert c_{n}\right\vert \leq 2\quad \left( n\geq 1\right) .
\end{equation*}
\end{lem}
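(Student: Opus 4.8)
The plan is to recover the Taylor coefficients $c_n$ from the real part $\Re p$ by a Fourier-type integral and then exploit the positivity of $\Re p$ together with the normalization $p(0)=1$. Fix $r\in(0,1)$ and set $u(\theta)=\Re p(re^{i\theta})$, a positive continuous function of $\theta\in[0,2\pi]$. First I would record two integral identities. Since $p$ is analytic with $p(z)=1+\sum_{k\ge1}c_kz^k$, the mean value property yields $\frac{1}{2\pi}\int_0^{2\pi}u(\theta)\,d\theta=\Re p(0)=1$, and for each $n\ge1$ the $n$-th coefficient is read off from the real part by
\[
c_n r^n=\frac{1}{\pi}\int_0^{2\pi}u(\theta)\,e^{-in\theta}\,d\theta .
\]
This last formula is the crux: writing $\Re p=\tfrac12(p+\overline p)$, the contribution of $\overline p$ vanishes by orthogonality of the exponentials (its frequencies being $e^{-ik\theta}$ with $k\ge0$), leaving exactly $\tfrac12 c_n r^n$ from the $p$-part.

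With these identities in hand the bound follows at once. Because $u(\theta)>0$, I would estimate
\[
|c_n|\,r^n=\left|\frac{1}{\pi}\int_0^{2\pi}u(\theta)e^{-in\theta}\,d\theta\right|\le\frac{1}{\pi}\int_0^{2\pi}u(\theta)\,d\theta=2,
\]
where the final equality invokes the normalization above. Since this holds for every $r<1$, letting $r\to1^{-}$ gives $|c_n|\le2$ for all $n\ge1$.

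The step I expect to be the most delicate is justifying the coefficient-extraction formula and, more conceptually, pinning down the role of positivity: the inequality $|\int u\,e^{-in\theta}|\le\int u$ is valid only because $u\ge0$, so the argument genuinely requires $\Re p>0$ and not merely analyticity. An alternative, equally clean route would be to invoke the Herglotz representation $p(z)=\int_{|x|=1}\frac{1+xz}{1-xz}\,d\mu(x)$ for a probability measure $\mu$ on the unit circle; expanding the kernel as $1+2\sum_{n\ge1}x^nz^n$ gives $c_n=2\int_{|x|=1}x^n\,d\mu(x)$, whence $|c_n|\le2\int d\mu=2$ immediately. In that approach the real work is hidden in establishing the representation itself (a weak-$*$ compactness argument on $\mathcal{P}$), so I would prefer the direct Fourier computation above, which needs nothing beyond Cauchy's formula and the positivity of $\Re p$.
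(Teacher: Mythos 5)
Your proof is correct, and it is essentially the standard argument: the paper does not prove this lemma at all, but simply cites Duren's book, where the Carath\'{e}odory coefficient bound is established by exactly this kind of reasoning (extracting $c_n r^n$ as a Fourier coefficient of the positive function $\Re p(re^{i\theta})$, or equivalently via the Herglotz representation you mention). Your direct computation, including the orthogonality step killing the $\overline{p}$ contribution and the use of positivity in $\bigl|\int u\,e^{-in\theta}\,d\theta\bigr|\leq\int u\,d\theta=2\pi$, is complete and needs no repair.
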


\begin{lem}
\label{lem2}\cite{MM} Let $p\in \mathcal{P}$ with $p\left( z\right)
=1+c_{1}z+c_{2}z^{2}+\cdots .$ Then for any complex number $\nu $%
\begin{equation*}
\left\vert c_{2}-\nu c_{1}^{2}\right\vert \leq 2\max \left\{ 1,\,\left\vert
2\nu -1\right\vert \right\} ,
\end{equation*}%
and the result is sharp for the functions given by%
\begin{equation*}
p\left( z\right) =\frac{1+z^{2}}{1-z^{2}}\quad \text{and}\quad p\left(
z\right) =\frac{1+z}{1-z}.
\end{equation*}
\end{lem}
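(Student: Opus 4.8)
The plan is to reduce the estimate to coefficient bounds for a Schwarz function. Since $p(0)=1$ and $\Re\left(p\right)>0$, the function $p$ maps $\mathbb{U}$ into the right half-plane, so the Cayley transform $\omega=(p-1)/(p+1)$ is a well-defined analytic function on $\mathbb{U}$ with $\omega(0)=0$ and $\left\vert\omega(z)\right\vert<1$. Writing $\omega(z)=b_{1}z+b_{2}z^{2}+\cdots$ and inverting gives $p=(1+\omega)/(1-\omega)=1+2\omega+2\omega^{2}+\cdots$. Comparing this expansion with $p(z)=1+c_{1}z+c_{2}z^{2}+\cdots$ I read off $c_{1}=2b_{1}$ and $c_{2}=2b_{2}+2b_{1}^{2}$, so that $c_{2}-\nu c_{1}^{2}=2b_{2}+(2-4\nu)b_{1}^{2}$.

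Next I would record the two coefficient estimates I need for $\omega$. By the Schwarz lemma $\left\vert b_{1}\right\vert\leq1$, and since $g(z)=\omega(z)/z$ maps $\mathbb{U}$ into the closed unit disk with $g(0)=b_{1}$ and $g^{\prime}(0)=b_{2}$, the Schwarz--Pick inequality yields the sharp bound $\left\vert b_{2}\right\vert\leq1-\left\vert b_{1}\right\vert^{2}$. (Equivalently one may invoke the known representation $2c_{2}=c_{1}^{2}+(4-c_{1}^{2})x$ with $\left\vert x\right\vert\leq1$, which feeds into the same optimization.) Applying the triangle inequality together with these two bounds and writing $t=\left\vert b_{1}\right\vert^{2}\in[0,1]$, I get $\left\vert c_{2}-\nu c_{1}^{2}\right\vert\leq2\left\vert b_{2}\right\vert+\left\vert 2-4\nu\right\vert\left\vert b_{1}\right\vert^{2}\leq2(1-t)+2\left\vert 2\nu-1\right\vert t$.

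The right-hand side is affine in $t$, hence attains its maximum at an endpoint: the value is $2$ at $t=0$ and $2\left\vert 2\nu-1\right\vert$ at $t=1$, giving $\left\vert c_{2}-\nu c_{1}^{2}\right\vert\leq2\max\left\{1,\,\left\vert 2\nu-1\right\vert\right\}$, as asserted. For sharpness I would exhibit the two stated extremals: $p(z)=(1+z^{2})/(1-z^{2})$ gives $c_{1}=0,\ c_{2}=2$ (the endpoint $t=0$) and forces equality when $\left\vert 2\nu-1\right\vert\leq1$, while $p(z)=(1+z)/(1-z)$ gives $c_{1}=c_{2}=2$ (the endpoint $t=1$) and forces equality when $\left\vert 2\nu-1\right\vert\geq1$.

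The only genuinely nontrivial input is the sharp second-coefficient estimate $\left\vert b_{2}\right\vert\leq1-\left\vert b_{1}\right\vert^{2}$; once it is in hand, the remainder is the triangle inequality and a one-variable affine maximization. I note that handling a \emph{complex} parameter $\nu$ introduces no extra difficulty in this approach, since the Schwarz-function representation never requires normalizing $c_{1}$ to be real.
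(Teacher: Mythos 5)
Your proof is correct, but there is nothing in the paper to compare it against: the paper states this lemma as a known result quoted from Ma and Minda \cite{MM} and uses it as a black box (it enters the proof of Theorem \ref{thm2}, applied to the term $c_{2}-\mu \frac{3\sigma (1-\alpha )}{4\tau ^{2}}c_{1}^{2}$), so your argument supplies a self-contained derivation that the paper omits. Your route is the standard one and every step checks out: the Cayley transform $\omega =(p-1)/(p+1)$ is legitimate since $\Re (p+1)>0$ rules out a vanishing denominator; the coefficient identities $c_{1}=2b_{1}$, $c_{2}=2b_{2}+2b_{1}^{2}$ follow from $p=1+2\omega +2\omega ^{2}+\cdots $; the key input $\left\vert b_{2}\right\vert \leq 1-\left\vert b_{1}\right\vert ^{2}$ is correctly obtained from Schwarz--Pick applied to $g(z)=\omega (z)/z$ (with the degenerate case $\left\vert b_{1}\right\vert =1$ forcing $b_{2}=0$ by the maximum principle); and the affine maximization in $t=\left\vert b_{1}\right\vert ^{2}$ gives exactly $2\max \left\{ 1,\left\vert 2\nu -1\right\vert \right\} $. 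Your observation that complex $\nu $ costs nothing is also right, since this approach never normalizes $c_{1}$ by rotation. The sharpness verification is correct as well: $p(z)=(1+z^{2})/(1-z^{2})$ gives $c_{1}=0$, $c_{2}=2$ and attains the bound precisely when $\left\vert 2\nu -1\right\vert \leq 1$, while $p(z)=(1+z)/(1-z)$ gives $c_{1}=c_{2}=2$ and attains it when $\left\vert 2\nu -1\right\vert \geq 1$, so the two extremals together cover all $\nu \in \mathbb{C}$. What your write-up buys over the paper's citation is transparency about where the estimate comes from; the only point worth tightening is the parenthetical appeal to the representation $2c_{2}=c_{1}^{2}+(4-c_{1}^{2})x$, $\left\vert x\right\vert \leq 1$, which as usually stated presupposes $c_{1}$ real (or a rotation normalization) and is therefore best left out of an argument whose selling point is that no such normalization is needed.
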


\begin{lem}
\label{lem4}\cite{R} Let the function $g$ defined by%
\begin{equation}
g(z)=z+\sum_{k=2}^{\infty }b_{k}z^{k}\qquad \left( z\in \mathbb{U}\right)
\label{1.2}
\end{equation}%
belongs to the function class $\mathcal{S}^{\ast }(\beta )\,\left( 0\leq
\beta <1\right) $. Then we have%
\begin{equation*}
\left\vert b_{2}\right\vert \leq 2\left( 1-\beta \right)
\end{equation*}%
and%
\begin{equation*}
\left\vert b_{3}\right\vert \leq \left( 1-\beta \right) \left( 3-2\beta
\right) .
\end{equation*}
\end{lem}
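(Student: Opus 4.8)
The plan is to convert the geometric defining condition for $\mathcal{S}^{\ast}(\beta)$ into an algebraic identity driven by a Carath\'{e}odory function, read off $b_2$ and $b_3$ explicitly, and then feed the resulting expressions into the coefficient bound of Lemma~\ref{lem1}. Since $g\in\mathcal{S}^{\ast}(\beta)$ means $\Re\left(zg'(z)/g(z)\right)>\beta$, and since $zg'(z)/g(z)\to 1$ as $z\to 0$, I would introduce a function $p(z)=1+c_{1}z+c_{2}z^{2}+\cdots$ in $\mathcal{P}$ and write
\[
\frac{zg'(z)}{g(z)}=\beta+\left(1-\beta\right)p(z).
\]
Clearing the denominator turns this into $zg'(z)=g(z)\bigl[\beta+(1-\beta)p(z)\bigr]$, an identity between two power series that I can compare coefficient by coefficient.

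First I would substitute $g(z)=z+\sum_{k\geq 2}b_{k}z^{k}$ and the series for $p$ into both sides, noting that $\beta+(1-\beta)p(z)=1+(1-\beta)c_{1}z+(1-\beta)c_{2}z^{2}+\cdots$. Matching the coefficient of $z^{2}$ gives $2b_{2}=b_{2}+(1-\beta)c_{1}$, hence $b_{2}=(1-\beta)c_{1}$. Matching the coefficient of $z^{3}$ gives $3b_{3}=b_{3}+(1-\beta)c_{1}b_{2}+(1-\beta)c_{2}$, and substituting the expression for $b_{2}$ yields
\[
b_{3}=\tfrac{1}{2}\left[(1-\beta)^{2}c_{1}^{2}+(1-\beta)c_{2}\right].
\]
This is the only genuine computation, and it reduces the whole problem to estimating $|c_{1}|$ and the combination appearing in $b_{3}$.

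Finally I would invoke Lemma~\ref{lem1}, namely $|c_{n}|\leq 2$. The first bound is immediate: $|b_{2}|=(1-\beta)|c_{1}|\leq 2(1-\beta)$. For the second, because $0\leq\beta<1$ the scalar factors $(1-\beta)^{2}$ and $(1-\beta)$ are positive, so a plain triangle inequality is already sharp enough:
\[
|b_{3}|\leq\tfrac{1}{2}\left[(1-\beta)^{2}|c_{1}|^{2}+(1-\beta)|c_{2}|\right]\leq\tfrac{1}{2}\left[4(1-\beta)^{2}+2(1-\beta)\right]=(1-\beta)(3-2\beta).
\]
The main obstacle is really just careful bookkeeping in the coefficient comparison; there is no analytic difficulty, since the sign structure makes the naive estimate tight. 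Should one prefer to track the interplay between $c_{1}^{2}$ and $c_{2}$ more delicately, Lemma~\ref{lem2} with $\nu=\beta-1$ gives $|(1-\beta)c_{1}^{2}+c_{2}|\leq 2(3-2\beta)$ and recovers the same bound, but for this lemma the direct route suffices.
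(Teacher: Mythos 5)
Your proof is correct, and it is worth noting that the paper itself offers no proof of this lemma at all: it is quoted as a known result of Robertson \cite{R}, exactly as Lemmas \ref{lem1} and \ref{lem2} are quoted from their sources. So there is nothing in the paper to compare against step by step; what you have done is reconstruct the standard argument behind the citation. Your bookkeeping checks out: writing $zg'(z)/g(z)=\beta+(1-\beta)p(z)$ with $p\in\mathcal{P}$, the coefficient of $z^{2}$ gives $b_{2}=(1-\beta)c_{1}$ and the coefficient of $z^{3}$ gives $2b_{3}=(1-\beta)^{2}c_{1}^{2}+(1-\beta)c_{2}$, after which $|c_{n}|\leq 2$ yields $|b_{2}|\leq 2(1-\beta)$ and $|b_{3}|\leq 2(1-\beta)^{2}+(1-\beta)=(1-\beta)(3-2\beta)$. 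Your claim that the naive triangle inequality is already sharp is also right: equality holds for $g(z)=z/(1-z)^{2(1-\beta)}$, i.e.\ $c_{1}=c_{2}=2$, which is consistent with the extremal choices $b_{2}=2(1-\beta)$, $b_{3}=(3-2\beta)(1-\beta)$ used in the sharpness discussion of Theorem \ref{thm3}. Your alternative closing step via Lemma \ref{lem2} with $\nu=\beta-1$ (so that $|2\nu-1|=3-2\beta>1$) is equally valid and gives the same constant; it is unnecessary here precisely because both terms in the expression for $b_{3}$ carry positive coefficients, but it is the route one would need if the signs conspired against the triangle inequality.
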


\begin{lem}
\label{thm1}Let the function $f$ given by $\left( \ref{1.1}\right) $ belongs
to the function class $\mathcal{K}_{\lambda ,\delta }\left( \alpha ,\beta
\right) .$ Then%
\begin{equation}
\left( 1+\lambda -\delta +2\lambda \delta \right) \left\vert
a_{2}\right\vert \leq 2-\alpha -\beta  \label{2.1}
\end{equation}%
and%
\begin{equation}
3\left( 1+2\lambda -2\delta +6\lambda \delta \right) \left\vert
a_{3}\right\vert \leq \left( 3-2\alpha -\beta \right) \left( 3-2\beta
\right) .  \label{2.a}
\end{equation}
\end{lem}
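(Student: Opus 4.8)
The plan is to reduce both inequalities to coefficient estimates for an auxiliary Carath\'eodory function together with the known bounds for the starlike majorant $g$. First I would write $g(z)=z+\sum_{k\geq 2}b_k z^k\in\mathcal{S}^{\ast}(\beta)$ and expand the numerator appearing in $\left(\ref{def.1}\right)$,
\[
F(z)=zf'(z)+\left(\lambda-\delta+2\lambda\delta\right)z^{2}f''(z)+\lambda\delta z^{3}f'''(z).
\]
Differentiating $\left(\ref{1.1}\right)$ term by term gives $F(z)=z+2\left(1+\lambda-\delta+2\lambda\delta\right)a_{2}z^{2}+3\left(1+2\lambda-2\delta+6\lambda\delta\right)a_{3}z^{3}+\cdots$; the only point to watch here is the bookkeeping of the combinatorial factors $k$, $k(k-1)$ and $k(k-1)(k-2)$ at $k=2,3$, which collapse to precisely the coefficients $1+\lambda-\delta+2\lambda\delta$ and $1+2\lambda-2\delta+6\lambda\delta$ occurring in $\left(\ref{2.1}\right)$ and $\left(\ref{2.a}\right)$.

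Next, since $f\in\mathcal{K}_{\lambda,\delta}(\alpha,\beta)$ and $F(z)/g(z)\to 1$ as $z\to 0$, the function
\[
p(z)=\frac{F(z)/g(z)-\alpha}{1-\alpha}=1+c_{1}z+c_{2}z^{2}+\cdots
\]
belongs to $\mathcal{P}$. Rewriting this relation as $F(z)=g(z)\left[\alpha+(1-\alpha)p(z)\right]$ and comparing the coefficients of $z^{2}$ and $z^{3}$, I would obtain
\[
2\left(1+\lambda-\delta+2\lambda\delta\right)a_{2}=b_{2}+(1-\alpha)c_{1}
\]
and
\[
3\left(1+2\lambda-2\delta+6\lambda\delta\right)a_{3}=b_{3}+(1-\alpha)b_{2}c_{1}+(1-\alpha)c_{2}.
\]

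The estimates then follow from the triangle inequality. For $\left(\ref{2.1}\right)$ I would use $\left\vert b_{2}\right\vert\leq 2(1-\beta)$ from Lemma $\ref{lem4}$ and $\left\vert c_{1}\right\vert\leq 2$ from Lemma $\ref{lem1}$, giving $2\left(1+\lambda-\delta+2\lambda\delta\right)\left\vert a_{2}\right\vert\leq 2(1-\beta)+2(1-\alpha)=2(2-\alpha-\beta)$. For $\left(\ref{2.a}\right)$ I would combine $\left\vert b_{3}\right\vert\leq(1-\beta)(3-2\beta)$ and $\left\vert b_{2}\right\vert\leq 2(1-\beta)$ from Lemma $\ref{lem4}$ with $\left\vert c_{1}\right\vert,\left\vert c_{2}\right\vert\leq 2$ from Lemma $\ref{lem1}$ to obtain
\[
3\left(1+2\lambda-2\delta+6\lambda\delta\right)\left\vert a_{3}\right\vert\leq(1-\beta)(3-2\beta)+4(1-\alpha)(1-\beta)+2(1-\alpha).
\]
The only remaining work is the purely algebraic verification that the right-hand side equals $(3-2\alpha-\beta)(3-2\beta)$; expanding both expressions shows each is $9-6\alpha-9\beta+2\beta^{2}+4\alpha\beta$, which yields the claimed bound. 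I do not anticipate any genuine obstacle: the argument is a linear coefficient comparison followed by the triangle inequality, so the care needed lies entirely in computing the coefficients of $F$ correctly and in confirming that the sum of the three individual bounds collapses exactly to the product form $(3-2\alpha-\beta)(3-2\beta)$.
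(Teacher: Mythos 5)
Your proposal is correct and follows essentially the same route as the paper: form the Carath\'eodory function $p$ from the defining condition, equate the coefficients of $z^{2}$ and $z^{3}$ to get $2\left(1+\lambda-\delta+2\lambda\delta\right)a_{2}=b_{2}+(1-\alpha)c_{1}$ and $3\left(1+2\lambda-2\delta+6\lambda\delta\right)a_{3}=b_{3}+(1-\alpha)b_{2}c_{1}+(1-\alpha)c_{2}$, then apply the triangle inequality with Lemma \ref{lem1} and Lemma \ref{lem4}. Your explicit verification that the sum of bounds collapses to $(3-2\alpha-\beta)(3-2\beta)$ is a detail the paper leaves implicit, but the argument is identical.
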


\begin{proof}
Let the function $f\in \mathcal{K}_{\lambda ,\delta }\left( \alpha ,\beta
\right) $ be of the form $\left( \ref{1.1}\right) $. Therefore, there exists
a function $g\in \mathcal{S}^{\ast }(\beta )$, defined in $\left( \ref{1.2}%
\right) $, so that%
\begin{equation*}
\Re \left( \frac{zf^{\prime }\left( z\right) +\left( \lambda -\delta
+2\lambda \delta \right) z^{2}f^{\prime \prime }\left( z\right) +\lambda
\delta z^{3}f^{\prime \prime \prime }(z)}{g\left( z\right) }\right) >\alpha
\qquad \left( z\in \mathbb{U}\right) .
\end{equation*}%
It follows from the above inequality that%
\begin{equation}
\frac{zf^{\prime }\left( z\right) +\left( \lambda -\delta +2\lambda \delta
\right) z^{2}f^{\prime \prime }\left( z\right) +\lambda \delta
z^{3}f^{\prime \prime \prime }(z)}{g\left( z\right) }=\alpha +\left(
1-\alpha \right) p(z),  \label{2.3}
\end{equation}%
with%
\begin{equation}
p(z)=1+c_{1}z+c_{2}z^{2}+\cdots \in \mathcal{P}.  \label{2.2}
\end{equation}%
The equality $\left( \ref{2.2}\right) $ implies the equality%
\begin{equation*}
\frac{z+\sum\limits_{k=2}^{\infty }k\left[ \left( 1-\lambda +\delta \right)
+k\left( \lambda -\delta \right) +k\left( k-1\right) \lambda \delta \right]
a_{k}z^{k}}{z+\sum\limits_{k=2}^{\infty }b_{k}z^{k}}=1+\left( 1-\alpha
\right) \sum_{k=1}^{\infty }c_{k}z^{k}.
\end{equation*}%
Equating coefficients of both sides, we have%
\begin{equation}
2\left( 1+\lambda -\delta +2\lambda \delta \right) a_{2}=b_{2}+\left(
1-\alpha \right) c_{1}  \label{2.4}
\end{equation}%
and%
\begin{equation}
3\left( 1+2\lambda -2\delta +6\lambda \delta \right) a_{3}=b_{3}+\left(
1-\alpha \right) b_{2}c_{1}+\left( 1-\alpha \right) c_{2}.  \label{2.5}
\end{equation}%
Using Lemma $\ref{lem1}$ and Lemma $\ref{lem4}$ in $\left( \ref{2.4}\right) $
and $\left( \ref{2.5}\right) $, we easily get $\left( \ref{2.1}\right) $ and
$\left( \ref{2.a}\right) $, respectively.
\end{proof}

\begin{remark}
In Lemma $\ref{thm1}$, letting $\delta =0$; $\delta =0,\alpha =0$; or $%
\delta =\lambda =0$, we have \cite[Lemma 3]{AD2}, \cite[Lemma 2.3]{AD} and
\cite[Lemma 3]{DT}, respectively.
\end{remark}

\section{Main Results}

In this section, we begin by solving the Fekete-Szeg\"{o} problem for
functions belonging to the class $\mathcal{K}_{\lambda ,\delta }\left(
\alpha ,\beta \right) $ when $\mu \in \mathbb{C}.$

\begin{thm}
\label{thm2}Let $f\left( z\right) $ given by $\left( \ref{1.1}\right) $
belongs to the function class $\mathcal{K}_{\lambda ,\delta }\left( \alpha
,\beta \right) .$ Then, for any complex number $\mu ,$%
\begin{eqnarray*}
3\left( 1+2\lambda -2\delta +6\lambda \delta \right) \left\vert a_{3}-\mu
a_{2}^{2}\right\vert &\leq &\left( 1-\beta \right) \max \left\{
1,\,\left\vert 3-2\beta -\mu \Psi _{\lambda ,\delta }\left( \beta \right)
\right\vert \right\} \\
&&+2\left( 1-\alpha \right) \max \left\{ 1,\,\left\vert 1-\mu \frac{\Psi
_{\lambda ,\delta }\left( \alpha \right) }{2}\right\vert \right\} \\
&&+4\left( 1-\alpha \right) \left( 1-\beta \right) \left\vert 1-\mu \frac{%
\Psi _{\lambda ,\delta }\left( 0\right) }{2}\right\vert ,
\end{eqnarray*}%
where%
\begin{equation*}
\Psi _{\lambda ,\delta }\left( s\right) =\frac{3\left( 1+2\lambda -2\delta
+6\lambda \delta \right) }{\left( 1+\lambda -\delta +2\lambda \delta \right)
^{2}}\left( 1-s\right) .
\end{equation*}
\end{thm}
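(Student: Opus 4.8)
The plan is to follow the proof strategy already established in Lemma~\ref{thm1}, starting from the coefficient identities
\begin{equation*}
2\left( 1+\lambda -\delta +2\lambda \delta \right) a_{2}=b_{2}+\left( 1-\alpha \right) c_{1}
\end{equation*}
and
\begin{equation*}
3\left( 1+2\lambda -2\delta +6\lambda \delta \right) a_{3}=b_{3}+\left( 1-\alpha \right) b_{2}c_{1}+\left( 1-\alpha \right) c_{2}.
\end{equation*}
First I would solve the first identity for $a_{2}$, square it, and substitute into the expression $3\left( 1+2\lambda -2\delta +6\lambda \delta \right)\left( a_{3}-\mu a_{2}^{2}\right)$. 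This produces a linear combination of $b_{3}$, the cross term $b_{2}c_{1}$, the square $b_{2}^{2}$, the square $c_{1}^{2}$, and $c_{2}$, with coefficients depending on $\lambda,\delta,\alpha,\mu$. The factor $\left(1+\lambda-\delta+2\lambda\delta\right)^{2}$ in the denominator of $\Psi_{\lambda,\delta}(s)$ strongly suggests that the $\mu$-dependent terms will organize themselves precisely so that $\Psi_{\lambda,\delta}(\beta)$, $\Psi_{\lambda,\delta}(\alpha)$, and $\Psi_{\lambda,\delta}(0)$ emerge as the natural coefficients.

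The key idea, and what the abstract calls the ``different way,'' is to group the resulting terms into three packets matching the three summands of the claimed bound. The plan is to collect the $b_{2},b_{3}$ terms into one packet of the form $b_{3}-\nu_{1}b_{2}^{2}$ (for an appropriate $\nu_{1}$ built from $\mu$ and $\Psi_{\lambda,\delta}(\beta)$); collect the $c_{1},c_{2}$ terms into a packet of the form $c_{2}-\nu_{2}c_{1}^{2}$ (with $\nu_{2}$ involving $\Psi_{\lambda,\delta}(\alpha)$); and isolate the remaining mixed term in $b_{2}c_{1}$, which will carry the factor $\left|1-\mu\,\Psi_{\lambda,\delta}(0)/2\right|$. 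Applying the triangle inequality splits the modulus into these three pieces.

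Next I would estimate each packet separately. For the $c$-packet I would invoke Lemma~\ref{lem2} to bound $\left|c_{2}-\nu_{2}c_{1}^{2}\right|\leq 2\max\{1,|2\nu_{2}-1|\}$, which after simplification yields the second summand $2\left(1-\alpha\right)\max\{1,|1-\mu\,\Psi_{\lambda,\delta}(\alpha)/2|\}$. For the $b$-packet I would use the analogous sharp bound for starlike functions of order $\beta$ (Theorem~\ref{lem5}, equivalently the Fekete--Szeg\"{o} estimate for $\mathcal{S}^{\ast}(\beta)$ applied to $g/(1-\beta)$ or directly via the bounds in Lemma~\ref{lem4}), producing the first summand $\left(1-\beta\right)\max\{1,|3-2\beta-\mu\,\Psi_{\lambda,\delta}(\beta)|\}$. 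For the mixed term I would bound $|b_{2}|\leq 2\left(1-\beta\right)$ from Lemma~\ref{lem4} and $|c_{1}|\leq 2$ from Lemma~\ref{lem1}, giving the third summand $4\left(1-\alpha\right)\left(1-\beta\right)\left|1-\mu\,\Psi_{\lambda,\delta}(0)/2\right|$.

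The main obstacle I anticipate is the algebraic bookkeeping in the grouping step: one must verify that the coefficient multiplying $b_{2}^{2}$ (after substitution) is exactly the $\nu_{1}$ for which Theorem~\ref{lem5} delivers the clean factor $3-2\beta-\mu\,\Psi_{\lambda,\delta}(\beta)$, and likewise that the coefficient of $c_{1}^{2}$ matches $\nu_{2}$ so that Lemma~\ref{lem2} produces $1-\mu\,\Psi_{\lambda,\delta}(\alpha)/2$, all while the cross term retains the coefficient tied to $\Psi_{\lambda,\delta}(0)$. This requires carefully tracking how $\mu/\left(1+\lambda-\delta+2\lambda\delta\right)^{2}$ distributes across the three packets; the appearance of the same $\Psi_{\lambda,\delta}$ evaluated at $\beta$, $\alpha$, and $0$ is the structural check that the decomposition has been carried out correctly.
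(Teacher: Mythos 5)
Your proposal is correct and is essentially identical to the paper's own proof: the paper likewise squares the identity for $a_{2}$, substitutes into $3\sigma\left(a_{3}-\mu a_{2}^{2}\right)$ with $\tau=1+\lambda-\delta+2\lambda\delta$, $\sigma=1+2\lambda-2\delta+6\lambda\delta$, groups the result into the packets $b_{3}-\mu\frac{3\sigma}{4\tau^{2}}b_{2}^{2}$, $c_{2}-\mu\frac{3\sigma(1-\alpha)}{4\tau^{2}}c_{1}^{2}$, and the cross term $\left(1-\mu\frac{3\sigma}{2\tau^{2}}\right)b_{2}c_{1}$, and then applies the triangle inequality together with Theorem~\ref{lem5}, Lemma~\ref{lem2}, and Lemmas~\ref{lem1} and~\ref{lem4} exactly as you describe. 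The algebraic bookkeeping you flag as the main obstacle works out precisely as you anticipate, yielding $\Psi_{\lambda,\delta}(\beta)$, $\Psi_{\lambda,\delta}(\alpha)$, and $\Psi_{\lambda,\delta}(0)$ in the three summands.
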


\begin{proof}
Let the function $f\in \mathcal{K}_{\lambda ,\delta }\left( \alpha ,\beta
\right) $ be of the form $\left( \ref{1.1}\right) $. For the simplicity, we
set%
\begin{equation}
\tau =1+\lambda -\delta +2\lambda \delta ,\qquad \sigma =1+2\lambda -2\delta
+6\lambda \delta .  \label{3.2}
\end{equation}%
From $\left( \ref{2.4}\right) $ and $\left( \ref{2.5}\right) $, we obtain%
\begin{eqnarray}
3\sigma \left( a_{3}-\mu a_{2}^{2}\right) &=&\left( b_{3}-\mu \frac{3\sigma
}{4\tau ^{2}}b_{2}^{2}\right) +\left( 1-\alpha \right) \left( c_{2}-\mu
\frac{3\sigma \left( 1-\alpha \right) }{4\tau ^{2}}c_{1}^{2}\right)  \notag
\\
&&+\left( 1-\alpha \right) \left( 1-\mu \frac{3\sigma }{2\tau ^{2}}\right)
b_{2}c_{1}.  \label{3.1}
\end{eqnarray}%
So we have%
\begin{eqnarray*}
3\sigma \left\vert a_{3}-\mu a_{2}^{2}\right\vert &\leq &\left\vert
b_{3}-\mu \frac{3\sigma }{4\tau ^{2}}b_{2}^{2}\right\vert +\left( 1-\alpha
\right) \left\vert c_{2}-\mu \frac{3\sigma \left( 1-\alpha \right) }{4\tau
^{2}}c_{1}^{2}\right\vert \\
&&+\left( 1-\alpha \right) \left\vert 1-\mu \frac{3\sigma }{2\tau ^{2}}%
\right\vert \left\vert b_{2}\right\vert \left\vert c_{1}\right\vert
\end{eqnarray*}%
Hence, by means of Theorem $\ref{lem5}$ and Lemmas $\ref{lem1}$-$\ref{lem4}$%
, we get desired result.
\end{proof}

Letting $\delta =0$ in Theorem $\ref{thm2}$, we get following consequence.

\begin{cor}
Let $f\left( z\right) $ given by $\left( \ref{1.1}\right) $ belongs to the
function class $\mathcal{U}_{\lambda ,\alpha }^{\beta }.$ Then, for any
complex number $\mu ,$%
\begin{eqnarray*}
3\left( 1+2\lambda \right) \left\vert a_{3}-\mu a_{2}^{2}\right\vert &\leq
&\left( 1-\beta \right) \max \left\{ 1,\,\left\vert 3-2\beta -\mu \Psi
_{\lambda }\left( \beta \right) \right\vert \right\} \\
&&+2\left( 1-\alpha \right) \max \left\{ 1,\,\left\vert 1-\mu \frac{\Psi
_{\lambda }\left( \alpha \right) }{2}\right\vert \right\} \\
&&+4\left( 1-\alpha \right) \left( 1-\beta \right) \left\vert 1-\mu \frac{%
\Psi _{\lambda }\left( 0\right) }{2}\right\vert ,
\end{eqnarray*}%
where%
\begin{equation}
\Psi _{\lambda }\left( s\right) =\frac{3\left( 1+2\lambda \right) }{\left(
1+\lambda \right) ^{2}}\left( 1-s\right) .  \label{psi}
\end{equation}
\end{cor}

Letting $\delta =0$ and $\alpha =0$ in Theorem $\ref{thm2}$, we get
following consequence.

\begin{cor}
Let $f\left( z\right) $ given by $\left( \ref{1.1}\right) $ belongs to the
function class $\mathcal{U}_{\lambda }^{\beta }.$ Then, for any complex
number $\mu ,$%
\begin{eqnarray*}
3\left( 1+2\lambda \right) \left\vert a_{3}-\mu a_{2}^{2}\right\vert &\leq
&\left( 1-\beta \right) \max \left\{ 1,\,\left\vert \left( 3-2\beta \right)
-\mu \Psi _{\lambda }\left( \beta \right) \right\vert \right\} +2\max
\left\{ 1,\,\left\vert 1-\mu \frac{\Psi _{\lambda }\left( \alpha \right) }{2}%
\right\vert \right\} \\
&&+4\left( 1-\beta \right) \left\vert 1-\mu \frac{\Psi _{\lambda }\left(
0\right) }{2}\right\vert ,
\end{eqnarray*}%
where $\Psi _{\lambda }$ is defined by $\left( \ref{psi}\right) $.
\end{cor}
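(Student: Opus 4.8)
The plan is to derive this corollary purely as a specialization of Theorem \ref{thm2}, inserting the two values $\delta=0$ and $\alpha=0$; no fresh estimate is needed, so the work is entirely bookkeeping. First I would record how the abbreviations (\ref{3.2}) reduce: with $\delta=0$ one has $\tau=1+\lambda-\delta+2\lambda\delta=1+\lambda$ and $\sigma=1+2\lambda-2\delta+6\lambda\delta=1+2\lambda$, so the prefactor $3\sigma$ on the left-hand side of Theorem \ref{thm2} becomes $3(1+2\lambda)$, which is exactly the coefficient appearing in the claim.

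Next I would check that the auxiliary function $\Psi_{\lambda,\delta}$ collapses to $\Psi_\lambda$. Substituting $\delta=0$ into $\Psi_{\lambda,\delta}(s)=\frac{3(1+2\lambda-2\delta+6\lambda\delta)}{(1+\lambda-\delta+2\lambda\delta)^{2}}(1-s)$ gives $\frac{3(1+2\lambda)}{(1+\lambda)^{2}}(1-s)$, which is precisely $\Psi_\lambda(s)$ as defined in (\ref{psi}); hence each occurrence of $\Psi_{\lambda,\delta}$ in the three summands of Theorem \ref{thm2} may be replaced by $\Psi_\lambda$.

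It then remains to impose $\alpha=0$, which sends every factor $(1-\alpha)$ to $1$. The middle summand $2(1-\alpha)\max\{1,|1-\mu\Psi_{\lambda,\delta}(\alpha)/2|\}$ becomes $2\max\{1,|1-\mu\Psi_\lambda(0)/2|\}$, the last summand $4(1-\alpha)(1-\beta)|1-\mu\Psi_{\lambda,\delta}(0)/2|$ becomes $4(1-\beta)|1-\mu\Psi_\lambda(0)/2|$, while the first summand $(1-\beta)\max\{1,|3-2\beta-\mu\Psi_{\lambda,\delta}(\beta)|\}$ is untouched by $\alpha$ and merely acquires $\Psi_\lambda$ in place of $\Psi_{\lambda,\delta}$. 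Collecting the three terms yields the stated inequality. I expect no genuine obstacle here, since the argument is a direct substitution into an already proved bound; the only point worth flagging is the cosmetic one that, after setting $\alpha=0$, the argument written as $\Psi_\lambda(\alpha)$ in the middle term is literally $\Psi_\lambda(0)$, so the displayed expressions indeed agree.
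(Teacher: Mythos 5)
Your proposal is correct and is exactly the paper's route: the paper obtains this corollary by simply setting $\delta =0$ and $\alpha =0$ in Theorem \ref{thm2}, which is the substitution you carry out, including the reduction of $\Psi _{\lambda ,\delta }$ to $\Psi _{\lambda }$ in $\left( \ref{psi}\right) $. Your closing remark that the middle term's argument $\Psi _{\lambda }\left( \alpha \right) $ is literally $\Psi _{\lambda }\left( 0\right) $ once $\alpha =0$ is also the right reading of the statement as printed.
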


Letting $\delta =0$ and $\lambda =0$ in Theorem $\ref{thm2}$, we get
following consequence.

\begin{cor}
Let $f\left( z\right) $ given by $\left( \ref{1.1}\right) $ belongs to the
function class $\mathcal{C}(\alpha ,\beta ).$ Then, for any complex number $%
\mu ,$%
\begin{eqnarray*}
3\left\vert a_{3}-\mu a_{2}^{2}\right\vert &\leq &\left( 1-\beta \right)
\max \left\{ 1,\,\left\vert 3-2\beta -3\mu \left( 1-\beta \right)
\right\vert \right\} \\
&&+2\left( 1-\alpha \right) \max \left\{ 1,\,\left\vert 1-\mu \frac{3\left(
1-\alpha \right) }{2}\right\vert \right\} \\
&&+2\left( 1-\alpha \right) \left( 1-\beta \right) \left\vert 2-3\mu
\right\vert .
\end{eqnarray*}
\end{cor}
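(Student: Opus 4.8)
The plan is to obtain this corollary directly as the special case $\delta=0$ and $\lambda=0$ of Theorem \ref{thm2}. This is legitimate because, by part $(iii)$ of the Remark following Definition \ref{dfn2}, the class $\mathcal{K}_{\lambda,\delta}(\alpha,\beta)$ reduces to exactly $\mathcal{C}(\alpha,\beta)$ under this choice of parameters; hence every $f$ satisfying the hypothesis of the corollary already satisfies the hypothesis of Theorem \ref{thm2} with $\lambda=\delta=0$, and the conclusion of that theorem applies.

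First I would evaluate the auxiliary quantities introduced in the proof of Theorem \ref{thm2}, namely $\tau=1+\lambda-\delta+2\lambda\delta$ and $\sigma=1+2\lambda-2\delta+6\lambda\delta$, at $\lambda=\delta=0$. Both collapse to $\tau=\sigma=1$. Consequently the normalizing coefficient $3\sigma=3\left(1+2\lambda-2\delta+6\lambda\delta\right)$ on the left-hand side of Theorem \ref{thm2} becomes simply $3$, matching the factor in the target inequality.

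Next I would simplify the auxiliary function. Since $\Psi_{\lambda,\delta}(s)=\dfrac{3\sigma}{\tau^{2}}\left(1-s\right)$ and $\tau=\sigma=1$, we get $\Psi_{0,0}(s)=3\left(1-s\right)$, so that $\Psi_{0,0}(\beta)=3\left(1-\beta\right)$, $\Psi_{0,0}(\alpha)=3\left(1-\alpha\right)$, and $\Psi_{0,0}(0)=3$. Substituting these three values into the three terms on the right-hand side of Theorem \ref{thm2} reproduces the first two lines of the corollary verbatim and leaves the last term as $4\left(1-\alpha\right)\left(1-\beta\right)\left|1-\frac{3\mu}{2}\right|$.

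The only remaining step, and the only place where a (trivial) manipulation is needed, is to rewrite this last term. Using the identity $\left|1-\frac{3\mu}{2}\right|=\frac{1}{2}\left|2-3\mu\right|$ converts $4\left(1-\alpha\right)\left(1-\beta\right)\left|1-\frac{3\mu}{2}\right|$ into $2\left(1-\alpha\right)\left(1-\beta\right)\left|2-3\mu\right|$, which is precisely the third line of the claimed bound. I expect no genuine obstacle here: the corollary is a transparent specialization of Theorem \ref{thm2}, and the entire task reduces to checking that the parameter substitution collapses $\tau$ and $\sigma$ to $1$, correctly reduces each $\Psi$-value, and tidies up the final absolute-value expression.
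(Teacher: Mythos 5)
Your proposal is correct and follows exactly the paper's own route: the paper obtains this corollary precisely by setting $\delta=0$ and $\lambda=0$ in Theorem \ref{thm2}, under which $\tau=\sigma=1$, $\Psi_{0,0}(s)=3(1-s)$, and the final term simplifies via $4\left\vert 1-\tfrac{3\mu}{2}\right\vert =2\left\vert 2-3\mu\right\vert$. Nothing is missing from your argument.
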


Now we prove our main result when $\mu $ is real.

\begin{thm}
\label{thm3}Let $f\left( z\right) $ given by $\left( \ref{1.1}\right) $
belongs to the function class $\mathcal{K}_{\lambda ,\delta }\left( \alpha
,\beta \right) .$ Then%
\begin{equation}
3\left( 1+2\lambda -2\delta +6\lambda \delta \right) \left\vert a_{3}-\mu
a_{2}^{2}\right\vert \leq \left\{
\begin{array}{l}
\left( 3-2\beta \right) \left( 3-2\alpha -\beta \right) -\mu \frac{3\left(
2-\alpha -\beta \right) ^{2}\left( 1+2\lambda -2\delta +6\lambda \delta
\right) }{\left( 1+\lambda -\delta +2\lambda \delta \right) ^{2}}, \\
\\
\quad \text{\textit{if}}\quad \mu \leq \frac{2\left( 1-\beta \right) \left(
1+\lambda -\delta +2\lambda \delta \right) ^{2}}{3\left( 2-\alpha -\beta
\right) \left( 1+2\lambda -2\delta +6\lambda \delta \right) } \\
\\
1-2\alpha +\beta \left( 3-2\beta \right) +\frac{4\left( 1-\beta \right)
^{2}\left( 1+\lambda -\delta +2\lambda \delta \right) ^{2}}{3\left(
1+2\lambda -2\delta +6\lambda \delta \right) \mu }, \\
\\
\quad \text{\textit{if}}\quad \frac{2\left( 1-\beta \right) \left( 1+\lambda
-\delta +2\lambda \delta \right) ^{2}}{3\left( 2-\alpha -\beta \right)
\left( 1+2\lambda -2\delta +6\lambda \delta \right) }\leq \mu \leq \frac{%
2\left( 1+\lambda -\delta +2\lambda \delta \right) ^{2}}{3\left( 1+2\lambda
-2\delta +6\lambda \delta \right) } \\
\\
3-2\alpha -\beta , \\
\\
\quad \text{\textit{if}}\quad \frac{2\left( 1+\lambda -\delta +2\lambda
\delta \right) ^{2}}{3\left( 1+2\lambda -2\delta +6\lambda \delta \right) }%
\leq \mu \leq \frac{2\left( 2-\beta \right) \left( 3-2\alpha -\beta \right)
\left( 1+\lambda -\delta +2\lambda \delta \right) ^{2}}{3\left( 2-\alpha
-\beta \right) ^{2}\left( 1+2\lambda -2\delta +6\lambda \delta \right) } \\
\\
\left( 2\beta -3\right) \left( 3-2\alpha -\beta \right) +\mu \frac{3\left(
2-\alpha -\beta \right) ^{2}\left( 1+2\lambda -2\delta +6\lambda \delta
\right) }{\left( 1+\lambda -\delta +2\lambda \delta \right) ^{2}}, \\
\\
\quad \text{\textit{if}}\quad \mu \geq \frac{2\left( 2-\beta \right) \left(
3-2\alpha -\beta \right) \left( 1+\lambda -\delta +2\lambda \delta \right)
^{2}}{3\left( 2-\alpha -\beta \right) ^{2}\left( 1+2\lambda -2\delta
+6\lambda \delta \right) }.%
\end{array}%
\right.  \label{4.x}
\end{equation}%
For each $\mu $, there is a function in $\mathcal{K}_{\lambda ,\delta
}\left( \alpha ,\beta \right) $ such that equality holds.
\end{thm}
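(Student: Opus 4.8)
The plan is to establish the sharp, real-$\mu$ Fekete-Szegő bound by returning to the exact coefficient identity~$\left(\ref{3.1}\right)$ derived in the proof of Theorem~$\ref{thm2}$, rather than invoking the three separate maxima as in that complex-$\mu$ version. The key observation is that the sharper piecewise result for real $\mu$ is obtained by treating the $b_2,b_3$ coefficients (coming from the starlike function $g$) together with the $c_1,c_2$ coefficients (coming from the Carathéodory function $p$) in a coordinated way. Using the shorthand $\tau=1+\lambda-\delta+2\lambda\delta$ and $\sigma=1+2\lambda-2\delta+6\lambda\delta$ from~$\left(\ref{3.2}\right)$, the identity reads
\begin{equation*}
3\sigma\left(a_{3}-\mu a_{2}^{2}\right)=\left(b_{3}-\mu\frac{3\sigma}{4\tau^{2}}b_{2}^{2}\right)+\left(1-\alpha\right)\left(c_{2}-\mu\frac{3\sigma\left(1-\alpha\right)}{4\tau^{2}}c_{1}^{2}\right)+\left(1-\alpha\right)\left(1-\mu\frac{3\sigma}{2\tau^{2}}\right)b_{2}c_{1}.
\end{equation*}
I would exploit the representations $c_1=2\zeta$, $c_2=2\eta^2+2(1-|\zeta|^2)w$ with $|\zeta|,|w|\le 1$ (the standard Carathéodory parametrization) and the bounds $|b_2|\le 2(1-\beta)$, $|b_3|\le(1-\beta)(3-2\beta)$ from Lemma~$\ref{lem4}$, but the cleanest route is to bound term by term after fixing the sign of $\mu$.

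**First I would** split into the regime where $\mu$ is small (so that every quantity $1-\mu\Psi_{\lambda,\delta}(s)/2$ and $3-2\beta-\mu\Psi_{\lambda,\delta}(\beta)$ is nonnegative) and the regime where $\mu$ is large. In the small-$\mu$ regime the three $\max$ expressions in Theorem~$\ref{thm2}$ all resolve to their second argument, and one simply adds
\begin{equation*}
(1-\beta)\bigl(3-2\beta-\mu\Psi_{\lambda,\delta}(\beta)\bigr)+2(1-\alpha)\Bigl(1-\mu\tfrac{\Psi_{\lambda,\delta}(\alpha)}{2}\Bigr)+4(1-\alpha)(1-\beta)\Bigl(1-\mu\tfrac{\Psi_{\lambda,\delta}(0)}{2}\Bigr),
\end{equation*}
which, upon substituting $\Psi_{\lambda,\delta}(s)=\frac{3\sigma}{\tau^{2}}(1-s)$ and collecting the linear-in-$\mu$ terms, must reduce to the first branch $\left(3-2\beta\right)\left(3-2\alpha-\beta\right)-\mu\frac{3(2-\alpha-\beta)^2\sigma}{\tau^2}$. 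The symmetric computation with all signs reversed yields the fourth branch for large $\mu$, and the two threshold values marking these extreme regimes are exactly where the corresponding $\max$ arguments equal $1$.

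**The main obstacle** will be the two middle branches, where the maxima do not all resolve the same way and genuine cancellation between the $g$-contribution and the $p$-contribution occurs; here the crude term-by-term bound of Theorem~$\ref{thm2}$ is not sharp, and one must instead optimize the full expression in the free parameters $b_2,c_1$ (equivalently $\zeta$) directly. Concretely, on $\frac{2(1-\beta)\tau^2}{3(2-\alpha-\beta)\sigma}\le\mu\le\frac{2\tau^2}{3\sigma}$ the factor $\bigl(1-\mu\frac{3\sigma}{2\tau^2}\bigr)$ in the cross term is still nonnegative but the $b_3$-coefficient has changed sign, forcing a Lemma~$\ref{lem2}$-type estimate on $c_2-\nu c_1^2$ combined with the extremal choice $|b_2|=2(1-\beta)$, $|b_3|=(1-\beta)(3-2\beta)$; carrying out this optimization and verifying it collapses to $1-2\alpha+\beta(3-2\beta)+\frac{4(1-\beta)^2\tau^2}{3\sigma\mu}$ is the computational heart. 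The flat branch $3-2\alpha-\beta$ on the interval $\frac{2\tau^2}{3\sigma}\le\mu\le\frac{2(2-\beta)(3-2\alpha-\beta)\tau^2}{3(2-\alpha-\beta)^2\sigma}$ then follows because the cross-term coefficient $1-\mu\frac{3\sigma}{2\tau^2}$ is now negative and one bounds $|b_3-\mu\frac{3\sigma}{4\tau^2}b_2^2|$ using the Keogh–Merkes starlike estimate of Theorem~$\ref{lem5}$.

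**Finally I would** verify sharpness branch by branch, exhibiting for each $\mu$ an extremal pair $(f,g)$: in the outer branches the extremizers arise from $p(z)=\frac{1+z}{1-z}$ together with the Koebe-type starlike function $g$ of order $\beta$, while the flat and reciprocal branches require the rotated choice $p(z)=\frac{1+z^2}{1-z^2}$ (as in Lemma~$\ref{lem2}$), matched against the appropriate $g\in\mathcal{S}^{*}(\beta)$; checking that the resulting $f$ genuinely lies in $\mathcal{K}_{\lambda,\delta}(\alpha,\beta)$ via~$\left(\ref{2.3}\right)$ and attains equality completes the argument. Throughout, the threshold values quoted in~$\left(\ref{4.x}\right)$ should be re-derived as the crossover points between adjacent branches, which serves as an internal consistency check on the bound.
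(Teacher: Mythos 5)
Your proposal has genuine gaps, and it also misses the reduction that makes the paper's proof essentially trivial. The paper does not return to the identity $\left(\ref{3.1}\right)$ at all: it defines $F_{\lambda ,\delta }(z)=\left( 1-\lambda +\delta \right) f\left( z\right) +\left( \lambda -\delta \right) zf^{\prime }\left( z\right) +\lambda \delta z^{2}f^{\prime \prime }(z)$, observes that $zF_{\lambda ,\delta }^{\prime }(z)$ is exactly the numerator in $\left(\ref{def.1}\right)$, so that $f\in \mathcal{K}_{\lambda ,\delta }\left( \alpha ,\beta \right)$ means precisely $F_{\lambda ,\delta }\in \mathcal{C}(\alpha ,\beta )$, and then notes $A_{2}=\tau a_{2}$, $A_{3}=\sigma a_{3}$. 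Applying the known Darus--Thomas result (Theorem $\ref{thm.DT}$) to $F_{\lambda ,\delta }$ with $\rho =\mu \sigma /\tau ^{2}$ yields $\left(\ref{4.x}\right)$ at once, thresholds, sharpness and all. Your plan instead tries to re-derive the Darus--Thomas theorem from scratch in the general setting, which is a much harder task and is where your argument breaks down.

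Concretely, two steps fail. First, your claim that the outer branches follow by summing the three resolved maxima from Theorem $\ref{thm2}$ is false on most of the relevant intervals: the middle term $\max \left\{ 1,\,\left\vert 1-\mu \Psi _{\lambda ,\delta }\left( \alpha \right) /2\right\vert \right\}$ equals its second argument only when $\mu \leq 0$ (respectively $\mu \geq 4\tau ^{2}/\left( 3\sigma \left( 1-\alpha \right) \right)$ on the other side), whereas the first branch of $\left(\ref{4.x}\right)$ extends to $\mu \leq \frac{2\left( 1-\beta \right) \tau ^{2}}{3\left( 2-\alpha -\beta \right) \sigma }>0$. On the sub-interval $0<\mu \leq \frac{2\left( 1-\beta \right) \tau ^{2}}{3\left( 2-\alpha -\beta \right) \sigma }$ the term-by-term bound gives the claimed branch \emph{plus} the strictly positive quantity $\left( 1-\alpha \right) \mu \Psi _{\lambda ,\delta }\left( \alpha \right)$, so the triangle-inequality route cannot reach the sharp constant there; a symmetric failure occurs near the lower end of the fourth branch. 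Second, the two middle branches, which you yourself identify as the computational heart, are never actually carried out; moreover, the method you sketch (using $\left\vert b_{2}\right\vert \leq 2\left( 1-\beta \right)$ and $\left\vert b_{3}\right\vert \leq \left( 1-\beta \right) \left( 3-2\beta \right)$ independently together with Lemma $\ref{lem2}$) cannot work, because $b_{2}$ and $b_{3}$ are not free parameters subject only to those modulus bounds: they are coupled through the coefficients $q_{1},q_{2}$ of the Carath\'{e}odory function associated with $g$, and the sharp reciprocal-in-$\mu$ branch arises exactly from this joint constraint. That delicate joint optimization is the content of the Darus--Thomas theorem, which the paper invokes rather than reproves; without either invoking it (as the paper does, via the operator $F_{\lambda ,\delta }$) or honestly redoing it, your argument does not establish $\left(\ref{4.x}\right)$.
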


\begin{proof}
Let $f\in \mathcal{K}_{\lambda ,\delta }\left( \alpha ,\beta \right) $ be
given by $\left( \ref{1.1}\right) $ and let us define the function%
\begin{equation}
F_{\lambda ,\delta }(z)=\left( 1-\lambda +\delta \right) f\left( z\right)
+\left( \lambda -\delta \right) zf^{\prime }\left( z\right) +\lambda \delta
z^{2}f^{\prime \prime }(z).  \label{4.a}
\end{equation}%
Then it is worthy to note that the condition $\left( \ref{def.1}\right) $ is
equal to%
\begin{equation*}
\Re \left( \frac{zF_{\lambda ,\delta }^{\prime }(z)}{g\left( z\right) }%
\right) >\alpha \qquad \left( z\in \mathbb{U}\right) .
\end{equation*}%
Since $g\in \mathcal{S}^{\ast }(\beta )$, by the definition of
close-to-convex function of order $\alpha $ and type $\beta $, we deduce
that $F_{\lambda ,\delta }\in \mathcal{C}(\alpha ,\beta ).$ By the
definition of close-to-convex function class $\mathcal{C}(\alpha ,\beta )$,
there exists two functions $p,q\in \mathcal{P}$ with%
\begin{equation*}
p(z)=1+c_{1}z+c_{2}z^{2}+\cdots
\end{equation*}%
and%
\begin{equation*}
q(z)=1+q_{1}z+q_{2}z^{2}+\cdots
\end{equation*}%
such that%
\begin{equation*}
\frac{zF_{\lambda ,\delta }^{\prime }(z)}{g\left( z\right) }=\alpha +\left(
1-\alpha \right) p(z)\qquad \text{with}\qquad \frac{zg^{\prime }(z)}{g\left(
z\right) }=\beta +\left( 1-\beta \right) q(z).
\end{equation*}%
We assume that the function $F_{\lambda ,\delta }$\ is of the form%
\begin{equation}
F_{\lambda ,\delta }(z)=z+\sum_{k=2}^{\infty }A_{k}z^{k}\qquad \left( z\in
\mathbb{U}\right) .  \label{4.b}
\end{equation}%
By Theorem $\ref{thm.DT}$, $F_{\lambda ,\delta }\in \mathcal{C}(\alpha
,\beta )$ implies that%
\begin{equation}
3\left\vert A_{3}-\rho A_{2}^{2}\right\vert \leq \left\{
\begin{array}{lll}
\left( 3-2\beta \right) \left( 3-2\alpha -\beta \right) -3\rho \left(
2-\alpha -\beta \right) ^{2} & , & \rho \leq \frac{2\left( 1-\beta \right) }{%
3\left( 2-\alpha -\beta \right) } \\
&  &  \\
1-2\alpha +\beta \left( 3-2\beta \right) +\frac{4}{3\rho }\left( 1-\beta
\right) ^{2} & , & \frac{2\left( 1-\beta \right) }{3\left( 2-\alpha -\beta
\right) }\leq \rho \leq \frac{2}{3} \\
&  &  \\
3-2\alpha -\beta  & , & \frac{2}{3}\leq \rho \leq \frac{2\left( 2-\beta
\right) \left( 3-2\alpha -\beta \right) }{3\left( 2-\alpha -\beta \right)
^{2}} \\
&  &  \\
\left( 2\beta -3\right) \left( 3-2\alpha -\beta \right) +3\rho \left(
2-\alpha -\beta \right) ^{2} & , & \rho \geq \frac{2\left( 2-\beta \right)
\left( 3-2\alpha -\beta \right) }{3\left( 2-\alpha -\beta \right) ^{2}}%
\end{array}%
\right. .  \label{4.1}
\end{equation}%
Now equating the coefficients of $\left( \ref{4.a}\right) $ and $\left( \ref%
{4.b}\right) $, we obtain%
\begin{equation*}
A_{2}=\tau a_{2},\qquad A_{3}=\sigma a_{3},
\end{equation*}%
where $\tau $ and $\sigma $ defined by $\left( \ref{3.2}\right) $. Hence we
get from the above equalities that%
\begin{equation*}
\left\vert A_{3}-\rho A_{2}^{2}\right\vert =\left\vert \sigma a_{3}-\rho
\tau ^{2}a_{2}^{2}\right\vert =\sigma \left\vert a_{3}-\rho \frac{\tau ^{2}}{%
\sigma }a_{2}^{2}\right\vert .
\end{equation*}%
Taking $\rho =\mu \sigma /\tau ^{2}$ in $\left( \ref{4.1}\right) $, we get
desired estimate $\left( \ref{4.x}\right) $.

Finally, sharpness of the results in $\left( \ref{4.x}\right) $ is getting by

\noindent (i) in Case 1: upon choosing%
\begin{eqnarray*}
c_{1} &=&c_{2}=2, \\
q_{1} &=&q_{2}=2, \\
b_{2} &=&2\left( 1-\beta \right) ,\quad b_{3}=\left( 3-2\beta \right) \left(
1-\beta \right) ,
\end{eqnarray*}

\noindent (ii) in Case 2: upon choosing%
\begin{eqnarray*}
c_{1} &=&\frac{2\left( 1-\beta \right) \left( 2\tau ^{2}-3\sigma \mu \right)
}{3\left( 1-\alpha \right) \sigma \mu },\quad c_{2}=2, \\
q_{1} &=&q_{2}=2, \\
b_{2} &=&2\left( 1-\beta \right) ,\quad b_{3}=\left( 3-2\beta \right) \left(
1-\beta \right) ,
\end{eqnarray*}

\noindent (iii) in Case 3: upon choosing%
\begin{eqnarray*}
c_{1} &=&0,\quad c_{2}=2 \\
q_{1} &=&0,\quad q_{2}=2, \\
b_{2} &=&0,\quad b_{3}=1-\beta ,
\end{eqnarray*}

\noindent (iv) in Case 4: upon choosing%
\begin{eqnarray*}
c_{1} &=&2i,\quad c_{2}=-2, \\
q_{1} &=&2i,\quad q_{2}=-2, \\
b_{2} &=&2\left( 1-\beta \right) i,\quad b_{3}=\left( 3-2\beta \right)
\left( \beta -1\right) .
\end{eqnarray*}
\end{proof}

\begin{remark}
Note that our method proves easily the theorems given by \cite[Theorem 1]%
{AD2} and \cite[Theorem 3.1]{AD} with a different way.
\end{remark}

\begin{cor}
In Theorem $\ref{thm3}$, letting $\delta =0$; $\delta =0,\alpha =0$; $\delta
=\lambda =0$; or $\delta =\lambda =0,\alpha =\beta =0$, we have \cite[%
Theorem 1]{AD2}, \cite[Theorem 3.1]{AD}, Theorem $\ref{thm.DT}$ and Theorem $%
\ref{thm.KM}$, respectively.
\end{cor}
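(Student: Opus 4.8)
The plan is to read the corollary as four nested specializations of Theorem~\ref{thm3}, in each of which I substitute the prescribed parameter values into the abbreviations $\tau=1+\lambda-\delta+2\lambda\delta$ and $\sigma=1+2\lambda-2\delta+6\lambda\delta$ of \eqref{3.2} and then simplify the four bound-pieces and the three breakpoints occurring in \eqref{4.x}. Every quantity in \eqref{4.x} is a rational expression in $\tau,\sigma,\alpha,\beta,\mu$, so no new analysis is required: the task is purely to verify that each substitution collapses \eqref{4.x} onto a previously published inequality.

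First, for $\delta=0$ I would record $\tau=1+\lambda$ and $\sigma=1+2\lambda$, so that the left-hand coefficient is $3(1+2\lambda)$ while the ratios appearing throughout become $\sigma/\tau^{2}=(1+2\lambda)/(1+\lambda)^{2}$ and $\tau^{2}/\sigma=(1+\lambda)^{2}/(1+2\lambda)$. Inserting these into the four branches and the three breakpoints of \eqref{4.x} reproduces the four-branch estimate of \cite[Theorem~1]{AD2}. Imposing in addition $\alpha=0$ replaces $2-\alpha-\beta$ by $2-\beta$ and $3-2\alpha-\beta$ by $3-\beta$ uniformly, and the resulting inequality is exactly \cite[Theorem~3.1]{AD}. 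For these two cases the verification is a term-by-term comparison with the external statements.

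The remaining two cases can be settled entirely within the present excerpt, and here it is cleanest to exploit the construction in the proof of Theorem~\ref{thm3}. When $\delta=\lambda=0$ the auxiliary function of \eqref{4.a} degenerates to $F_{\lambda,\delta}=f$, and $\tau=\sigma=1$ forces $A_{2}=a_{2}$, $A_{3}=a_{3}$ and $\rho=\mu\sigma/\tau^{2}=\mu$; hence \eqref{4.1} is literally the bound of Theorem~\ref{thm.DT} for $f$, so that case is immediate. For the last case I would further set $\alpha=\beta=0$ in this Darus--Thomas form: the three breakpoints become $\tfrac13,\tfrac23,1$, the four pieces become $9-12\mu$, $1+\tfrac{4}{3\mu}$, $3$, $12\mu-9$, and dividing by $3$ recovers the four branches $3-4\mu$, $\tfrac13+\tfrac{4}{9\mu}$, $1$, $4\mu-3$ of Theorem~\ref{thm.KM}.

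I expect the only genuine difficulty to be clerical: confirming that the intervals, and not merely the bound values, line up after each specialization. Because the rescaling $\rho=\mu\sigma/\tau^{2}$ is already built into \eqref{4.x}, the breakpoints of the specialized bound carry factors $\tau^{2}/\sigma$ relative to the Darus--Thomas breakpoints $2(1-\beta)/[3(2-\alpha-\beta)]$, $2/3$, $2(2-\beta)(3-2\alpha-\beta)/[3(2-\alpha-\beta)^{2}]$, and these must be matched against the corresponding breakpoints in each cited statement. Once $\tau$ and $\sigma$ are fixed numerically in each case this matching is immediate, so the corollary follows by presenting the four substitutions and reading off the coincidences.
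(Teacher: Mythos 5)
Your proposal is correct and coincides with the paper's own justification, which is left implicit precisely because it amounts to the parameter substitutions you carry out: setting $\tau=1+\lambda-\delta+2\lambda\delta$ and $\sigma=1+2\lambda-2\delta+6\lambda\delta$ to their specialized values in each branch and breakpoint of \eqref{4.x}, with your observation that $\delta=\lambda=0$ forces $F_{\lambda,\delta}=f$, $\tau=\sigma=1$, $\rho=\mu$ being the cleanest way to see the last two cases. One remark: your specialization correctly produces the branch $\tfrac{1}{3}+\tfrac{4}{9\mu}$ on $\tfrac{1}{3}\leq\mu\leq\tfrac{2}{3}$, which agrees with the original Keogh--Merkes result, whereas Theorem \ref{thm.KM} as printed in this paper shows $\tfrac{1}{3}-\tfrac{4}{9\mu}$ (which is negative on that interval and hence evidently a sign typo in the transcription), so the discrepancy you would encounter in the final term-by-term check is a defect of the printed statement, not of your computation.
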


\end{document}